\newtheorem{thm}{\bf Theorem}[section]
\newtheorem{prop}[thm]{\bf Proposition}
\newtheorem{definition}[thm]{\bf Definition}
 	\definecolor{azure(colorwheel)}{rgb}{0.0, 0.5, 1.0}
  	\definecolor{awesome}{rgb}{1.0, 0.13, 0.32}
\begin{document}
\title[Folding maps on a cross-cap]
{Folding maps on a cross-cap\\
}
\author[M.~Barajas~S.]{Mart\'{i}n Barajas S.}
\address[M.~Barajas~S.]{Proyecto Curricular de Matem\'aticas, Facultad de Ciencias y Educaci\'on,
Universidad Distrital F.J.C., Bogot\'a, Colombia.}
\email{mbarajass@udistrital.edu.co}
%
%
%
\subjclass[2010]{Primary 53A05; secondary 57R45}
\keywords{Crosscaps, folding maps, singularities}
\date{\today}
\begin{abstract}
We study the reflectional symmetry of a surface in the Euclidean $3$-dimensional space with a cross-cap singularity with respect to planes. 
This symmetry is picked up by the singularities of folding maps on the cross-cap. 
We give a list of the generic singularities that appear in the members of the family of folding maps on a cross-cap 
and characterise them geometrically. 
\end{abstract}
\maketitle
\setlength{\baselineskip}{14pt}
%
%
\section{Introduction}\label{section1}
Given a plane $W$ in the Euclidean 3-space $\mathbb{R}^3$ with normal vector $\eta$, the \textit{folding map} with respect to $W$ is the map $f_W:\mathbb{R}^3\rightarrow\mathbb{R}^3$ given by 
\begin{equation}\label{eqfold}
f_W(p)=q+\lambda^2 \eta,
\end{equation}
where $q$ is the projection of $p$ into $W$ along $\eta$ and $\lambda$ is the distance between $p$ and $W$. Thus, $p$ and its reflection with respect to $W$ have the same image under $f_W$.
Varying the plane $W$ gives the \textit{family of folding maps}.

Bruce and Wilkinson studied in \cite{Bruce1991,Wilkinson1991} the family of folding maps restricted to a smooth surface $M$ in  $\mathbb{R}^3$. A member of the family is locally a map-germ $\mathbb R^2,0\to\mathbb R^3,0$. They showed that the singularities types of the folding map, when allowing smooth changes of coordinates in the source and target, capture some aspect of the extrinsic geometry of the surface $M$.  
For instance, the folding map is singular at $p\in M$ if and only if $W$ is a member of the pencil of planes containing the normal line of $M$ at $p$. The singularity is more degenerate than a cross-cap if and only if $W$ is orthogonal to a principal direction of $M$ at $p$. A certain type of singularities of the folding maps ($S_2$) occurs along a curve on the surface called {\it the sub-parabolic curve}. It turns out that the sub-parabolic curve is the locus of points where a principal curvature is extremal along the lines of the other principal curvature.
Another singularity type ($B_2$) occurs on the {\it ridge curve} which 
is the locus of points where a principal curvature is extremal along its own lines of principal curvature.
Bruce and Wilkinson also proved that the bifurcation set of the family if folding maps is the dual of the union of the focal and symmetry sets of $M$. This captures, for instance, the parabolic set on the focal surface of $M$. An analogous work was carried out in \cite{Izumiya2010} for surfaces in the hyperbolic or the Sitter 3-dimensional spaces.

The cross-cap singularity occurs stably on parametrized surfaces in $\mathbb R^3$. 
The extrinsic differential geometry of the cross-cap was initiated by Bruce and West in \cite{BruceWest1998, West1995}. Since then, several work was carried out on the subject; see for example \cite{Barajas2017,BarajasKabata2016,Dias2016,Fukui2012,Garcia2000,Tari2007}. This paper is part of this ongoing work on the geometry of the cross-cap. 
We consider here the reflectional symmetry of a cross-cap with respect to planes and consider folding maps on the cross-cap. 
We obtain the list of generic singularities of these folding maps 
(Theorem \ref{th1}) and  describe their associated geometry (Theorem \ref{th2}). We give some preliminaries in Section \ref{section1} and  \ref{section2}
%
\section{Preliminaries}\label{section2}
\subsection{Geometric cross-cap}
Whitney showed that maps $\mathbb{R}^2\rightarrow\mathbb{R}^3$ can have a stable local singularity under smooth changes of coordinates in the source and target. A model of this singularity is given by $f(x,y)=(x,xy,y^2)$. 
A map germ $g:\mathbb{R}^2,0\rightarrow\mathbb{R}^3,0$ is said to have a cross-cap singularity if it is $\mathcal{A}$-equivalent to $f$ (we write  $f\sim_{\mathcal{A}}g$), that is, there exist germs of diffeomorphisms $h:\mathbb{R}^2,0\rightarrow\mathbb{R}^2,0$ and $k:\mathbb{R}^3,0\rightarrow\mathbb{R}^3,0$, such that $k\circ f=g\circ h$. 
The image of the map-germ $g$ is a germ of a singular surface called cross-cap. 

West showed in \cite{West1995} that a parametrization of a cross-cap can be taken, by a suitable choice of a coordinate system in the source and isometries in the target, in the form
\begin{equation}\label{Par_Cros}
\phi(x,y)=(x,xy+p(y),ax^2+bxy+y^2+q(x,y)),
\end{equation}
where $a$ and $b$ are constant real numbers. We write
$$
\begin{array}{rcl}
p(y)&=&p_3y^3+p_4y^4+O(5),\\
q(x,y)&=&\sum^{3}_{k=0}q_{3k}x^{3-k}y^k+\sum^{4}_{k=0}q_{4k}x^{4-k}y^k+O(5),
\end{array}
$$
where $O(l)$ denotes a reminder in $(x,y)$ of order $l$. 

A cross-cap with a parametrization as in \eqref{Par_Cros} is called \textit{geometric cross-cap} (in contrast to the $\mathcal A$-model $f$ above where the geometry is destroyed by the diffeomorphisms in the target). 

A key feature of the cross-cap is its double point curve. It is a regular curve 
in the source given by $x=-p_3y^2+\text{h.o.t.}$ for a cross-cap parametrized as in \eqref{Par_Cros} (see \cite{West1995}). This curve is mapped by $\phi$ to a segment of a curve ending at the cross-cap point, see Figure \ref{fig3}.

The tangent space to the cross-cap at the cross-cap point is one dimensional. We called this space the \textit{tangential line}. The orthogonal complement to the tangential line is called the \textit{normal plane}. The tangent cone at the cross-cap point is also an important object for the study of the differential geometry of the cross-cap. For a cross-cap with a parametrization as in \eqref{Par_Cros} 
the tangential line is parallel to the $x$-axis, the tangent cone coincides with the $xz$-plane and the normal plane is the $yz$-plane (see Figure \ref{fig3}).

\begin{figure}[h!!!]
 \includegraphics[width=7.5cm,clip]{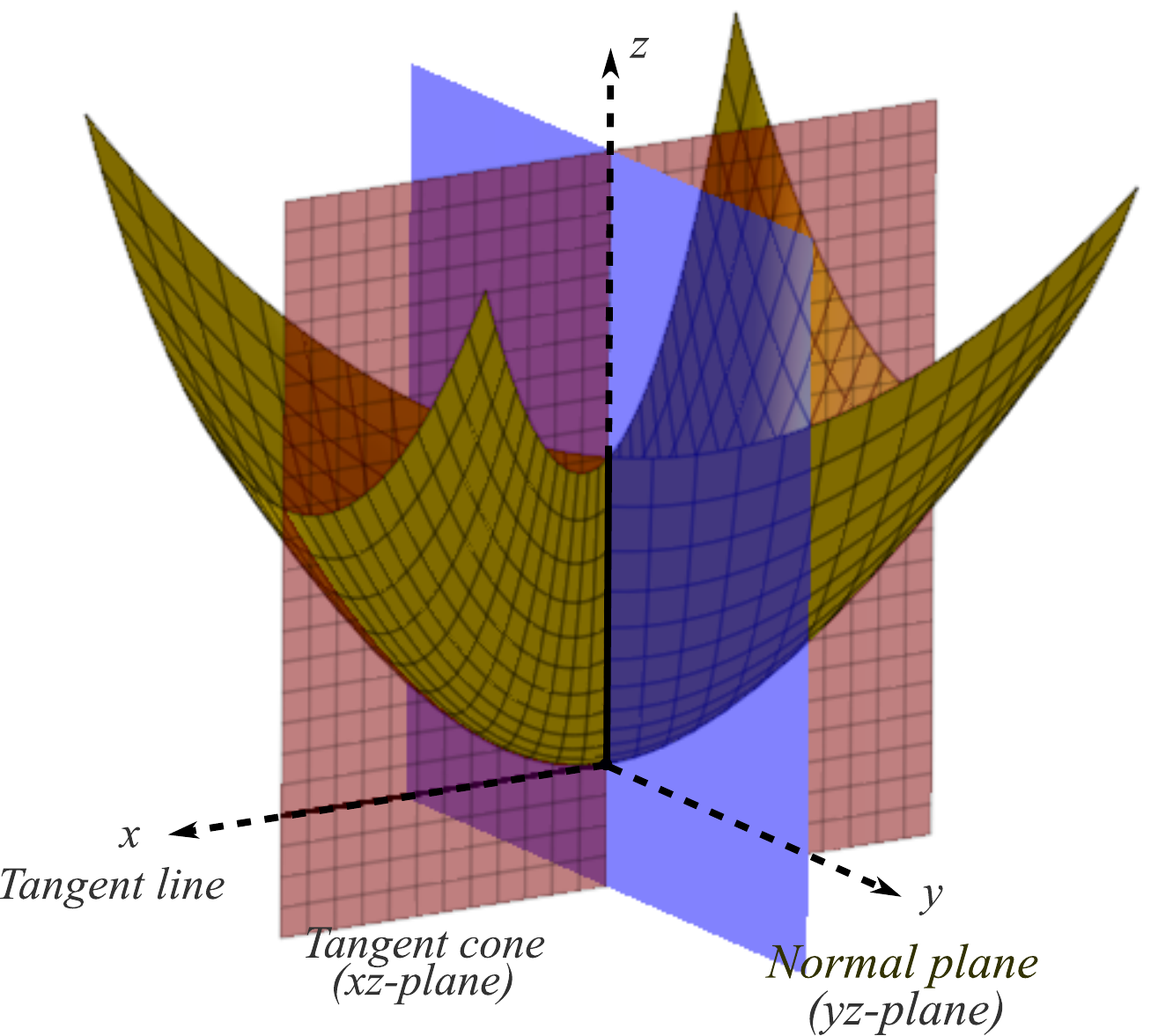}\\
 \caption{The tangential line, tangent cone and normal plane to an elliptic cross-cap.}
\label{fig3}
\end{figure}

The parabolic curve on a geometric cross-cap is studied in \cite{BruceWest1998,West1995}. When $a\ne 0$, the parabolic set 
in the source has a Morse singularity. If $a<0$, the parabolic set is an isolated point (the cross-cap point) and every regular point on the surface is hyperbolic. The cross-cap is called a hyperbolic cross-cap (Figure \ref{fig2}, right). 
If $a>0$, the parabolic set in the source is the union of two transverse curves and the cross-cap is called an elliptic cross-cap (Figure \ref{fig2}, left). When $a=0$, the cross-cap is called a parabolic cross-cap (Figure \ref{fig2}, center). For a generic parabolic cross-cap, the parabolic set in the source has a cusp singularity.  
We say that a geometric cross-cap is generic if $a\neq0$.

\begin{figure}[h!!!]
 \includegraphics[width=12.5cm,clip]{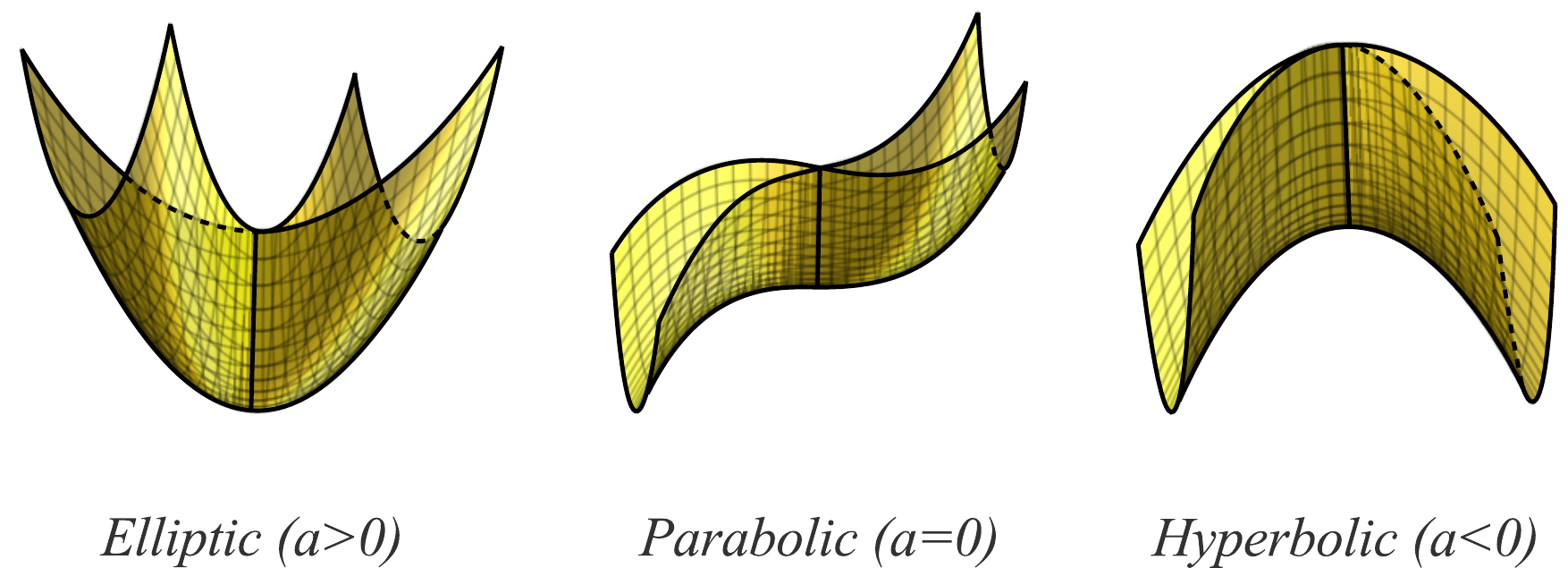}\\
 \caption{Geometric cross-caps (\cite{BruceWest1998,West1995}).}
\label{fig2}
\end{figure}

Let $p$ be a regular point on the cross-cap surface, and suppose that it is not an umbilic point. Let $v_i$ be a principal direction and $\kappa_i$ its associated principal curvature at $p$, for $i=1,2$. Denote by $v_i(\kappa_j)$ the directional derivative of $\kappa_j$ along $v_i$, $i,j=1,2$. The point $p$ is a sub-parabolic point relative to $v_i$ if $v_i(\kappa_j)(p)=0$, $i\neq j$. Similarly, $p$ is a ridge point relative to $v_i$ if $v_i(\kappa_i)(p)=0$.\\

%
\begin{prop}[\cite{West1995}]
Suppose that $C$ is a regular curve in the source that passes through the origin, parametrised by
$
\gamma(t)=\left(\alpha t+O(2),\beta t+O(2)\right).
$
Then as we approach the cross-cap point along the curve parametrised by $\phi\circ\gamma$ (for $\phi$ as in \eqref{Par_Cros}) one principal curvature tends to
\begin{equation}\label{k1cross-cap}
\frac{2\left(a\alpha^2-\beta^2\right)}{\alpha\left(\alpha^2+\left(2\beta+\alpha b\right)^2\right)^{\frac{1}{2}}}
\end{equation}
and the other tends to infinity.
\end{prop}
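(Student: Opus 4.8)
The plan is to obtain the two principal curvatures along $\phi\circ\gamma$ as the roots of the characteristic equation of the shape operator and to compute their limits as $t\to0$. Writing $E,F,G$ for the coefficients of the first fundamental form and $e,f,g$ for those of the second, the principal curvatures solve
\[
(EG-F^2)\,\kappa^2-(Eg+eG-2fF)\,\kappa+(eg-f^2)=0.
\]
The structural point driving everything is that $\phi$ is singular at the origin, so $\phi_x\times\phi_y$ vanishes there and $EG-F^2\to0$ as $t\to0$; the degeneration of the leading coefficient of this quadratic is precisely what sends one root to infinity while the other stays bounded. The whole proof therefore reduces to tracking the orders of vanishing in $t$ of the three coefficients.

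First I would record $\phi_x=(1,y,2ax+by+q_x)$ and $\phi_y=(0,x+p',bx+2y+q_y)$, together with the limits $\phi_{xx}\to(0,0,2a)$, $\phi_{xy}\to(0,1,b)$, $\phi_{yy}\to(0,0,2)$, using that $p$ and $q$ carry no terms below cubic order. Substituting $x=\alpha t+O(t^2)$, $y=\beta t+O(t^2)$ I would show $E=1+O(t^2)$, $F=O(t^2)$ and $G=Dt^2+O(t^3)$, where $D:=\alpha^2+(2\beta+\alpha b)^2$, so that $EG-F^2=Dt^2+O(t^3)$. Computing $\phi_x\times\phi_y$ shows its first component is $O(t^2)$ while the remaining two are $-(2\beta+\alpha b)t+O(t^2)$ and $\alpha t+O(t^2)$; dividing by $|\phi_x\times\phi_y|=\sqrt D\,|t|+O(t^2)$ gives the limiting unit normal
\[
N\longrightarrow \frac{\varepsilon}{\sqrt D}\bigl(0,-(2\beta+\alpha b),\alpha\bigr),\qquad \varepsilon:=\operatorname{sign}(t).
\]
Pairing this limit with the limiting second derivatives (all of which have vanishing first coordinate, so the $O(t^2)$ first component of $N$ is irrelevant) yields the finite limits $e\to 2a\alpha\varepsilon/\sqrt D$, $f\to -2\beta\varepsilon/\sqrt D$ and $g\to 2\alpha\varepsilon/\sqrt D$.

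Finally I would analyse the quadratic. Assuming $\alpha\neq0$ (so that $g$ has a nonzero limit), as $t\to0$ the coefficient $EG-F^2=Dt^2+O(t^3)\to0$ while $Eg+eG-2fF\to g$ and $eg-f^2$ tend to finite values; hence one root blows up like $(Eg+eG-2fF)/(EG-F^2)\to\infty$ while the bounded root tends to $(eg-f^2)/g$. Since $eg-f^2\to 4(a\alpha^2-\beta^2)/D$, this gives
\[
\kappa_-\longrightarrow\frac{eg-f^2}{g}=\frac{2(a\alpha^2-\beta^2)}{\alpha\sqrt D}=\frac{2(a\alpha^2-\beta^2)}{\alpha\bigl(\alpha^2+(2\beta+\alpha b)^2\bigr)^{1/2}},
\]
which is \eqref{k1cross-cap}, the factor $\varepsilon$ cancelling. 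The main obstacle I expect is the bookkeeping of vanishing orders: one must check that every discarded higher-order term in $N$, in the $\phi_{ij}$ and in $E,F,G,e,f,g$ genuinely fails to affect these two limits, and that $EG-F^2$ and $g$ vanish and survive, respectively, at exactly the claimed rates. The sign $\varepsilon$ and the excluded case $\alpha=0$ (approach tangent to the normal plane, where the bounded root also escapes, consistent with $\alpha$ appearing in the denominator) merit a brief separate remark.
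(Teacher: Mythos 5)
Your computation is correct, and there is nothing in the paper to compare it against: the proposition is quoted from West's thesis \cite{West1995} and the paper supplies no proof. Your route --- degenerate first fundamental form at the singular point, limiting unit normal along the curve, and the quadratic $(EG-F^2)\kappa^2-(Eg+eG-2fF)\kappa+(eg-f^2)=0$ with vanishing leading coefficient --- is the standard one and is essentially West's original derivation; the orders of vanishing you record ($EG-F^2=Dt^2+O(t^3)$, $e,f,g$ with finite nonzero limits when $\alpha\neq0$) all check out, and the bounded root indeed tends to $(eg-f^2)/g=2(a\alpha^2-\beta^2)/(\alpha\sqrt{D})$. One small correction: the factor $\varepsilon=\operatorname{sign}(t)$ does not literally cancel in $(eg-f^2)/g$ (the numerator carries $\varepsilon^2$, the denominator $\varepsilon$); it is absorbed by fixing an orientation of the normal, equivalently by approaching the cross-cap point from one side ($t\to0^+$), which is the convention implicit in the statement. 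Your remarks that the excluded case $\alpha=0$ sends both roots to infinity, consistent with the $\alpha$ in the denominator, are also accurate.
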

We denote by $\kappa_1$ the principal curvature that tends to \eqref{k1cross-cap} and by $\kappa_2$ the other one. The limiting principal vectors $\nu_i$ associate to $\kappa_i$ are defined in \cite{Fukui2012} by a natural limiting process. The ridge curve on a cross-cap is studied in \cite{Fukui2012} and \cite{West1995} and the sub-parabolic curve in \cite{Fukui2012}.\\

It is shown in \cite{Fukui2012} that there is at least one and at most three regular sub-parabolic curves relative to $\nu_2$ and there are no sub-parabolic points relative to $\nu_1$ at the cross-cap. Denote by $(w_1,w_2)$ the tangent directions of these curves in the source, then 
\begin{equation}\label{eq_subparab}
 abw_1^3+(b^2+2a+1)w_1^2w_2+3bw_1w_2^2+2w_2^3=0,
\end{equation}
where $a,b$ are the coefficients in the parametrisation \eqref{Par_Cros}. 
There are two ridge curves relative to $\nu_2$ at the cross-cap and their tangent directions $(w_1,w_2)$ in the source satisfy
\begin{equation}\label{eq_ridge1}
\left(bw_1-2w_2\right)w_1=0.
\end{equation}

In \cite{Tari2007} the author studied the lines of principal curvatures on a cross-cap and showed that there is generically one single topological model. In particular, there are three separatrices of the foliations, one is given by $y=-\frac{1}{2}q_{31}x^2+\text{h.o.t.}$ and the other two are given by $x=\lambda_i y^2+\text{h.o.t.}$, $i=1,2$, with
\begin{equation}\label{eq_sep_lc}
\lambda_i^2+3p_3\lambda_i-2=0.
\end{equation}

%
\section{The singularities of the folding maps on a cross-cap}\label{section3}
Consider a plane in $\mathbb R^3$ whose points $p$ satisfy 
\begin{equation*}
W_{(\eta,\delta)}: \left\langle p,\eta\right\rangle=\delta, 
\end{equation*}
with $\eta\in\mathbb{S}^2$ and $\delta\in\mathbb{R}$. 
Varying $\eta$ and $\delta$ gives all the planes in $\mathbb R^3$.
For a fixed geometric cross-cap $\phi$ as in \eqref{Par_Cros}, the family  $F:\mathbb{R}^2\times\mathbb{S}^2\times\mathbb{R}\rightarrow\mathbb{R}^3$ 
of folding maps on the cross-cap
is given by
\begin{eqnarray*}
F(x,y,\eta,\delta) & = & \phi(x,y)+(\left\langle \eta,\phi(x,y) \right\rangle-\delta)(\left\langle \eta,\phi(x,y) \right\rangle-\delta-1)\eta.
\end{eqnarray*}

We write $f_{(\eta,\delta)}$ for the restriction of $F$ to the plane $W_{(\eta,\delta)}$. 
This is locally a map-germ $\mathbb{R}^2,0\rightarrow\mathbb{R}^3,0$.
The singularities of such map-germs under the action of the group $\mathcal A$ of smooth changes of coordinates in the source and target are classified by Mond in \cite{Mond1982,Mond1985}. We determine here the $\mathcal A$-singularities of the map-germs $f_{(\eta,\delta)}$. 
Before that, we need some notation.\\

Let $\mathcal{E}_n$ denote the local ring of germs of functions $\mathbb{R}^n,0\rightarrow\mathbb{R}$ and $\mathcal{M}_n$ its maximal ideal. Denote by $\mathcal{E}(2,3)$ the 3-tuples of elements in $\mathcal{E}_2$.
The tangent space to the $\mathcal{A}$-orbit of $f: \mathbb{R}^2,0\rightarrow\mathbb{R}^3,0$ at the germ $f$ is given by
\begin{equation*}
T\mathcal{A}\cdot f=\mathcal{M}_2\cdot\left\{f_{x},f_{y}\right\}+f^*\left(\mathcal{M}_3\right)\cdot\left\{e_1,e_2,e_3\right\},
\end{equation*}
where $f_{x}$ and $f_y$ are the partial derivatives of $f$, $\left\{e_1,e_2,e_3\right\}$ denotes the standard basis vectors of $\mathbb{R}^3$ considered as elements of $\mathcal{E}(2,3)$ and $f^*\left(\mathcal{M}_3\right)$ is the pull-back of the maximal ideal in $\mathcal{E}_3$. \\

The extended tangent space is defined as 
\begin{equation*}
T_e\mathcal{A}\cdot f=\mathcal{E}_2\cdot\left\{f_{x},f_{y}\right\}+f^*\left(\mathcal{E}_3\right)\cdot\left\{e_1,e_2,e_3\right\},
\end{equation*}
and the $\mathcal{A}_e$-codimension of the germ $f$ is
\begin{equation*}
\mathcal{A}_e\text{-cod}\left(f\right)=\dim_{\mathbb{R}}\frac{\mathcal{E}(2,3)}{T_e\mathcal{A}\cdot f}.
\end{equation*}

Let $f\in\mathcal{M}_n\cdot\mathcal{E}(n,p)$. A $q$-parameter unfolding $(q,F)$ of $f$ is a map-germ
\begin{equation*}
F:\mathbb{R}^n\times\mathbb{R}^q,(0,0)\rightarrow\mathbb{R}^p\times\mathbb{R}^q,(0,0)
\end{equation*}
in the form $F(x,u)=\left(\bar{f}(x,u),u\right)$, with $\bar{f}(x,0)=f(x)$. The family 
\begin{equation*}
\bar{f}:\mathbb{R}^n\times\mathbb{R}^q,(0,0)\rightarrow\mathbb{R}^p,0\end{equation*}
is called a $q$-parameter deformation of $f$. Let $I$ be the identity element  in $\mathcal{A}$. A morphism between two unfoldings $(q_1,F)$ and $(q_2,G)$ is a pair $\left(\alpha,\psi\right):(q_1,F)\rightarrow(q_2,G)$ with $\alpha:\mathbb{R}^{q_1},0\rightarrow\mathcal{A},I$, $\psi:\mathbb{R}^{q_1},0\rightarrow\mathbb{R}^{q_2},0$, such that $\bar{f}_u=\alpha(u)\cdot\bar{g}_{\psi(u)}$. The unfolding $(q_1,F)$ is then said to be induced from $(q_2,G)$ by $\left(\alpha,\psi\right)$. An unfolding $(q_1,F)$ of a map-germ $f$ is said to be $\mathcal{A}_e$-versal if any unfolding $(q_2,G)$ of $f$ can be induced from $(q_1,F)$.\\

Now we state a fundamental theorem on unfoldings. Given an unfolding $F(x,u)=\left(f(x,u),u\right)$, the initial speeds, $\dot{F}_i\in\mathcal{E}(n,p)$, of $F$ are defined by
\begin{equation*}
\dot{F}_i(x)=\frac{\partial f}{\partial u_i}(x,0),\quad\text{for}\quad i=1,\ldots,q.
\end{equation*}
\begin{thm}[\cite{Martinet1982}]\label{thm_versal-unfolding}
	An unfolding $(q,F)$ of $f\in\mathcal{M}_n\cdot\mathcal{E}(n,p)$ is $\mathcal{A}_e$-versal if and only if
	\begin{equation*}
	T_e\mathcal{A}\cdot f+\mathbb{R}\cdot\left\{\dot{F}_1,\ldots,\dot{F}_q\right\}=\mathcal{E}(n,p).
	\end{equation*}
\end{thm}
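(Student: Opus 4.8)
The plan is to prove the two implications separately, using the extended tangent space $T_e\mathcal{A}\cdot f$ of the excerpt and writing $\dot F_i$ for the initial speeds. Necessity is a short differentiation argument; the real content is sufficiency, whose proof rests on two classical devices, the Malgrange--Mather preparation theorem and the homotopy (path-integration) method, the hard part being the passage from an infinitesimal condition on the central fibre to one valid over a whole neighbourhood of the parameter space.

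\emph{Necessity.} Suppose $(q,F)$ is $\mathcal{A}_e$-versal and fix an arbitrary $v\in\mathcal{E}(n,p)$. I would apply versality to the one-parameter unfolding $G$ with deformation $\bar g(x,t)=f(x)+t\,v(x)$, whose only initial speed is $\dot G_1=v$: there is a morphism $(\alpha,\psi)$ with $\bar g_t=\alpha(t)\cdot\bar f_{\psi(t)}$. Differentiating this identity at $t=0$, and using $\alpha(0)=I$, $\psi(0)=0$, expresses $v$ as a sum of the infinitesimal $\mathcal{A}$-action coming from $\tfrac{d}{dt}\alpha(t)|_{0}$, which lies in $T_e\mathcal{A}\cdot f$ (the extended tangent space is forced because the base point $\bar f(0,u)$ is free to move in the target), and the reparametrisation term $\sum_i\dot\psi_i(0)\,\dot F_i$. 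Since $v$ was arbitrary, $T_e\mathcal{A}\cdot f+\mathbb{R}\cdot\{\dot F_1,\dots,\dot F_q\}=\mathcal{E}(n,p)$.

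\emph{Sufficiency: reduction.} Assume the infinitesimal equality and let $(q',G)$ be any unfolding of $f$; the task is to induce it from $F$. First I would form the fibred sum $H$ over $\mathbb{R}^q_u\times\mathbb{R}^{q'}_w$ with deformation $\bar h(x,u,w)=\bar f(x,u)+\bar g(x,w)-f(x)$, so that $H|_{w=0}=F$ and $H|_{u=0}=G$. Thus $G$ is induced from $H$ by the slice inclusion $w\mapsto(0,w)$, while the restriction $H|_{w=0}=F$ is infinitesimally versal by hypothesis. I claim the extra parameters $w$ are inessential, i.e.\ that $H$ is isomorphic as an unfolding of $f$ to the trivial extension $F\times\mathbb{R}^{q'}$. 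Granting the claim, the chain closes: $F\times\mathbb{R}^{q'}$ is induced from $F$ by forgetting $w$, and $G$ is induced from $H\cong F\times\mathbb{R}^{q'}$; since inducibility is transitive and invariant under isomorphism, $G$ is induced from $F$, which is the versality of $F$.

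\emph{Sufficiency: the key step.} It remains to trivialise the $w$-dependence of $H$, and here lies the whole difficulty. Setting up the homotopy method in the $w$-directions, one seeks a family of unfolding isomorphisms whose flow straightens out $w$; differentiating the trivialisation identity reduces this to solving, for $j=1,\dots,q'$, the homological equation
\[
\frac{\partial \bar h}{\partial w_j}\in T_e\mathcal{A}\cdot \bar h_{(u,w)}+\sum_{i=1}^{q}\mathcal{E}_{q+q'}\cdot\frac{\partial\bar h}{\partial u_i},
\]
required to hold over an entire neighbourhood of the parameter space, with coefficients depending on $(u,w)$, not merely on the central fibre. The main obstacle is precisely upgrading the central-fibre infinitesimal versality to this neighbourhood-wide, fibrewise solvability. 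This is what the preparation theorem provides: the hypothesis says the $\dot F_i$ generate the relevant quotient module over $\mathbb{R}$ at $(u,w)=0$, and Malgrange--Mather preparation promotes this to generation of the corresponding module over the parameter ring for $(u,w)$ small, making the equation solvable. Integrating the resulting $w$-dependent vector fields on source and target, together with the induced reparametrisations of the base, by a Thom--Levine type argument, yields the isomorphism $H\cong F\times\mathbb{R}^{q'}$ and completes the proof.
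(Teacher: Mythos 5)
The paper offers no proof of this statement at all --- it is quoted verbatim from Martinet \cite{Martinet1982} --- so there is no in-paper argument to compare against; what you have written is an outline of the standard proof from that reference, and its architecture is correct. Necessity by differentiating the inducing morphism of the linear one-parameter unfolding is right, and your parenthetical remark about why the \emph{extended} tangent space appears (the base point of $\alpha(u)$ in the target is free to move) is exactly the correct observation. The sufficiency scheme --- fibred sum, trivialisation of the extra parameters by the homotopy method, preparation theorem to upgrade the central-fibre hypothesis to fibrewise solvability of the homological equation, then integration of the resulting vector fields --- is the standard Thom--Levine/Martinet argument. The one place where your sketch genuinely underestimates the difficulty is the preparation step itself: for $\mathcal{A}$-equivalence, $T_e\mathcal{A}\cdot f$ is \emph{not} a module over a single local ring (the first summand $\mathcal{E}_n\cdot\{f_x,f_y,\dots\}$ is an $\mathcal{E}_n$-module while the second is an $f^*\mathcal{E}_p$-module), so the Malgrange--Mather theorem does not apply directly to ``the corresponding module over the parameter ring'' as you assert. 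One needs the mixed-module refinement (Mather's preparation lemma for a pair of rings, or equivalently the reduction of versality to infinitesimal stability of an associated suspended germ) to make the homological equation solvable with smooth dependence on $(u,w)$. As an outline of a cited classical theorem your proposal is faithful; as a complete proof it would need that lemma stated and applied explicitly.
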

Our goal in is to classify germs of folding maps on a cross-cap and to analise the deformations in the members of the family of folding 
maps. We start with the following result.
\begin{prop}\label{prop3.1}
For any $(\eta,\delta)\in\mathbb{S}^2\times\mathbb{R}$, 
the folding map $f_{(\eta,\delta)}$ on a cross-cap as in \eqref{Par_Cros} 
is singular at the origin. The 
singularity is more degenerate than a cross-cap if, and only if, the plane 
$W_{(\eta,\delta)}$ contains the origin, that is, $\delta=0$.
\end{prop}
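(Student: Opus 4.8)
The plan is to read off the singularity type directly from the low-order derivatives of $f_{(\eta,\delta)}$ at the origin, exploiting the normal form \eqref{Par_Cros}. Write $h(x,y)=\langle\eta,\phi(x,y)\rangle-\delta$, so that $f_{(\eta,\delta)}=\phi+(h^2-h)\eta$ and hence, for any first-order derivation $\partial$, $\partial f_{(\eta,\delta)}=\partial\phi+(2h-1)\langle\eta,\partial\phi\rangle\eta$. First I would evaluate the first derivatives at the origin. From \eqref{Par_Cros} one has $\phi(0)=0$, $\phi_x(0)=(1,0,0)$ and, crucially, $\phi_y(0)=(0,p'(0),q_y(0))=0$ because $p$ and $q$ start at order three. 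Consequently $\langle\eta,\phi_y(0)\rangle=0$, the $\eta$-correction in $\partial_y f_{(\eta,\delta)}(0)$ vanishes, and $\partial_y f_{(\eta,\delta)}(0)=0$ for every $(\eta,\delta)$. The differential therefore has rank at most one and $f_{(\eta,\delta)}$ is singular at the origin, which settles the first assertion.

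For the second assertion I would invoke the standard (Whitney) criterion for a cross-cap: a corank-one germ with $f_y(0)=0$ is $\mathcal A$-equivalent to the cross-cap if and only if the three vectors $f_x(0)$, $f_{xy}(0)$, $f_{yy}(0)$ are linearly independent, a linear dependence meaning the singularity is more degenerate. Since $\phi_y(0)=0$, no change of coordinates in the source is needed and I would simply differentiate the displayed formula again. Writing $w_1=\phi_x(0)=(1,0,0)$, $w_2=\phi_{xy}(0)=(0,1,b)$, $w_3=\phi_{yy}(0)=(0,0,2)$, and using $h(0)=-\delta$ together with $\langle\eta,\phi_y(0)\rangle=0$, the computation collapses to
\[
f_x(0)=w_1+c\langle\eta,w_1\rangle\eta,\qquad f_{xy}(0)=w_2+c\langle\eta,w_2\rangle\eta,\qquad f_{yy}(0)=w_3+c\langle\eta,w_3\rangle\eta,
\]
where $c=2h(0)-1=-2\delta-1$.

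The structural point that makes the criterion computable is that all three vectors are the images of the \emph{fixed} basis $w_1,w_2,w_3$ of $\mathbb R^3$ under one and the same linear map $T=\mathrm{Id}+c\,\eta\eta^{\top}$, a rank-one perturbation of the identity. Hence $f_x(0),f_{xy}(0),f_{yy}(0)$ are linearly dependent if and only if $\det T=0$. By the matrix determinant lemma (equivalently, $\eta$ is an eigenvector with eigenvalue $1+c$ while $\eta^{\perp}$ is fixed pointwise), $\det T=1+c|\eta|^2=1+c=-2\delta$. Thus the three vectors are dependent precisely when $\delta=0$, i.e. when $W_{(\eta,\delta)}$ passes through the cross-cap point; otherwise they are independent and $f_{(\eta,\delta)}$ is a genuine cross-cap.

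The one place requiring care, and the step I expect to be the main obstacle, is that Whitney's criterion presupposes corank exactly one, i.e. $f_x(0)\neq0$. Inspecting the formula gives $f_x(0)=(1+c\eta_1^2,\,c\eta_1\eta_2,\,c\eta_1\eta_3)$, which can vanish only when $\eta=(\pm1,0,0)$ (the tangential direction) and $c=-1$, that is $\delta=0$. In that borderline case the germ has corank two and is \emph{a fortiori} more degenerate than a cross-cap, so it falls inside the $\delta=0$ branch and does not disturb the equivalence; for every $\delta\neq0$ one genuinely has $f_x(0)\neq0$ and the criterion applies verbatim. I would record this remark to close the argument.
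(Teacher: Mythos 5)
Your proof is correct and follows essentially the same route as the paper: the first claim comes from observing that $\partial_y f_{(\eta,\delta)}(0,0)=0$, and the second from Whitney's recognition criterion for the cross-cap. You simply carry out in full the computation the paper leaves implicit, and your organisation of it via the rank-one perturbation $T=\mathrm{Id}+c\,\eta\eta^{\top}$ (so that dependence of $f_x(0),f_{xy}(0),f_{yy}(0)$ reduces to $\det T=-2\delta=0$), together with the check that corank $2$ only occurs when $\delta=0$ and $\eta$ is tangential, is a clean and complete way to do it.
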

\begin{proof}
The first part follows by observing that  $\frac{\partial f_{(\eta,\delta)}}{\partial y}(0,0)=(0,0,0)$ for all $\left(\eta,\delta\right)\in\mathbb{S}^2\times\mathbb{R}$, 
 thus \mbox{rank $df_{(\eta,\delta)}(0,0)\leq 1$}. For the second part we use Whitney's criteria for recognition of the cross-cap singularity (see \cite{Whitney1943}).
\end{proof}
We list below the non-stable $\mathcal A$-singularities that occur at the origin in the members of the family of folding maps on a cross-cap. By Proposition \ref{prop3.1} above, $\delta=0$. We denote $f_{(\eta,0)}$ by $f_\eta$ and write $\eta=(\alpha,\beta,\gamma)$.


\begin{thm}\label{th1}
For a generic cross-cap $\phi$ as in \eqref{Par_Cros}, the folding map $f_\eta$ has a singularity $\mathcal{A}$-equivalent to one in Table \ref{table1} when $\eta$ is transverse to the tangent cone, and to one in Table \ref{table2} when $\eta$ is in the tangent cone but is not parallel to the tangential direction.\\
When $\eta$ is parallel to the tangential direction, 
$f_\eta$ has corank 2 and is $\mathcal A$-equivalent to
\begin{equation}\label{eq_corank2}
(x^2,xy+y^3,y^2+Ax^3+Bx^2y+Cxy^2+y^3),
\end{equation}
with $\Theta(A,B,C)\neq0$, where 
\begin{eqnarray*}
\Theta(A,B,C) & = & -10240A^3+\left(-2560C^2-3840B-1440C-135\right)A^2+\\
							& + & \left(-160C^4+2880B^2C+800BC^2-20C^3+60B^2+\right.\\
							& + & \left. 90BC\right)A+40B^2C^3-540B^4-180B^3C+5B^2C^2-20B^3.
\end{eqnarray*}
%
\newpage
\begin{table}[ht!!!]
\caption{Singularities of the folding map $f_{\eta}$ when $\eta$ transverse to the tangent cone.}
\begin{center}
{\begin{tabular}{llcc}
\hline
Type & Normal form     & $\mathcal{A}_e$-cod.&  Conditions\\
\hline
$B^{\pm}_{2}$ & $(x,y^2,x^2y\pm y^5)$     & $2$ & \mbox{$\alpha\neq0$, $\gamma\neq p_3\alpha$}                             \\
$B^{\pm}_{3}$ & $(x,y^2,x^2y\pm y^7)$     & $3$ & \mbox{$\alpha\neq0$, $\gamma=p_3\alpha$}, $(*)$\\
$B^{\pm}_{4}$ & $(x,y^2,x^2y\pm y^9)$     & $4$ & \mbox{$\alpha\neq0$, $\gamma=p_3\alpha$, $(**)$}\\
$C^{\pm}_{3}$ & $(x,y^2,xy^3\pm x^3y)$    & $3$ & \mbox{$\alpha=0$, $\Phi(\beta,\gamma)\neq0$}                            \\
$C^{\pm}_{4}$ & $(x,y^2,xy^3\pm x^4y)$    & $4$ & \mbox{$\alpha=0$, $\Phi(\beta,\gamma)=0$}                               \\
$F_{1,0}$     & $(x,y^2,x^3y+A_1xy^5+B_1y^7)$ & $4$ & \mbox{$\beta=1$, $4A_1^3+27B_1^2\neq0$}                                    
\\
\hline
\end{tabular}
}
\end{center}
\label{table1}
\end{table}
where $\Phi(\beta,\gamma)=-2b\beta^3+(4a-b^2+2)\beta^2\gamma+\gamma^3$ and $F_{1,0}$ is a unimodal singularity (see \cite[page 29]{Mond1982}).\\

\noindent {\footnotesize
$(*)$ The singularity $B^{\pm}_{3}$ occurs for generic $\eta$ on the plane curve $\gamma=p_3\alpha$ in $\mathbb{S}^2$.\\
$(**)$ The singularity $B^{\pm}_{4}$ occurs for special values of $\eta$ on the curve $\gamma=p_3\alpha$ 
whose expression too lengthy to reproduce here, see \cite{Barajas2017} for details}.\\

\begin{small}
\begin{table}[ht!!!]
\caption{Singularities of the folding map $f_{\eta}$ when $\eta$ is in the tangent cone but is not parallel to the tangential direction.}
\begin{center}
{
\begin{tabular}{llcc}
\hline
Type & Normal form     & $\mathcal{A}_e$-cod.&  Conditions\\ \hline
$P_{3}$              & $(x,xy+y^3,xy^2+ky^4)$              & $3$  & \mbox{$\alpha\neq -p_3\gamma$, $k\neq\frac{1}{2},1,\frac{3}{2}$} \\
$P_{4}(\frac{1}{2})$ & $(x,xy+y^3,xy^2+\frac{1}{2}y^4)$    & $4$  & \mbox{$\alpha\neq -p_3\gamma$, $\Psi(\frac{1}{2})=0$}    \\
$P_{4}(\frac{3}{2})$ & $(x,xy+y^3,xy^2+\frac{3}{2}y^4)$    & $4$  & \mbox{$\alpha\neq -p_3\gamma$, $\Psi(\frac{3}{2})=0$}    \\
$P_{4}(1)$           & $(x,xy+y^3,xy^2+y^4)$               & $4$  & \mbox{$\alpha\neq -p_3\gamma$, $\Psi(1)=0$}\\              
$R_{4}$              & $(x,xy+y^6+A_2y^7,xy^2+y^4+B_2y^6)$ & $4$  & \mbox{\hspace{0.7cm}$\alpha=-p_3\gamma$} \\
$T_{4}$              & $(x,xy+y^3,y^4)$                    & $4$  & $\gamma=1$    \\                          
\hline
\end{tabular}
}
\end{center}
\label{table2}
\end{table}
\end{small}

\noindent where $\Psi(k)=\Psi(k,\alpha,\gamma)=-2k\alpha(\alpha+p_3\gamma)+1$.
\end{thm}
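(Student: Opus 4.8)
\section*{Proof proposal}

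The plan is to work directly with the explicit formula for $f_\eta = f_{(\eta,0)}$. Setting $\delta=0$ in $F$ (justified by Proposition \ref{prop3.1}) and writing $g=\langle\eta,\phi\rangle$, one has $f_\eta=\phi+(g^2-g)\eta$. Since $\phi(0,0)=0$ we get $g(0,0)=0$, and a direct computation gives $\partial_x f_\eta(0,0)=e_1-\alpha\eta$ and $\partial_y f_\eta(0,0)=0$. Hence the image of the differential is spanned by $e_1-\alpha\eta$, and this vector vanishes exactly when $e_1=\alpha\eta$, i.e. when $\eta=\pm e_1$ is the tangential direction. Thus the corank of $f_\eta$ at the origin is $2$ precisely for $\eta$ tangential and $1$ otherwise. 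This dichotomy is what separates the corank-$2$ statement from Tables \ref{table1} and \ref{table2}, so I would organise the argument into the three cases of the statement according to the position of $\eta$.

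For the corank-$1$ cases the second-order data decides which table applies. The kernel of the differential is the $y$-direction, and a short computation gives $\partial_y^2 f_\eta(0,0)=(-2\alpha\gamma,\,-2\beta\gamma,\,2-2\gamma^2)$. When $\eta$ is transverse to the tangent cone ($\beta\neq0$) this vector is independent of $e_1-\alpha\eta$, so a normal component carries a genuine $y^2$ term and $f_\eta$ reduces to a prenormal form $(x,y^2,\ast)$, placing it in Mond's $B$/$C$/$F$ series (Table \ref{table1}); when $\eta$ lies in the tangent cone but is not tangential ($\beta=0$, $\gamma\neq0$) one checks $\partial_y^2 f_\eta(0,0)$ becomes proportional to $e_1-\alpha\eta$, the $y^2$ terms drop out, and the prenormal form degenerates to $(x,xy+y^3,\ast)$, giving the $P$/$R$/$T$ series (Table \ref{table2}). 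In both families I would carry $j^k f_\eta$ to the order dictated by the relevant normal form (up to order $9$ for $B^\pm_4$), reduce to the prenormal form using that $p$ and $q$ contribute only from order $3$, and then read off the exact type from Mond's recognition criteria \cite{Mond1982,Mond1985}. The defining equalities in the two tables — $\gamma=p_3\alpha$, $\Phi(\beta,\gamma)=0$, $\Psi(k)=0$ and the like — are precisely the vanishing of the prenormal-form coefficients that separate consecutive orbits, and the accompanying inequalities (such as $4A_1^3+27B_1^2\neq0$ for $F_{1,0}$) record the genericity that fixes the stated codimension.

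For the corank-$2$ case I would substitute $\eta=e_1$, so that $g=x$ and $f_{e_1}=(x^2,\,xy+p(y),\,ax^2+bxy+y^2+q)$. Subtracting $a$ times the first component and $b$ times the second from the third kills the $ax^2$ and $bxy$ terms, so $j^2 f_{e_1}\sim_{\mathcal A}(x^2,xy,y^2)$, the generic corank-$2$ $2$-jet. I would then push the reduction to the $3$-jet, normalising the cubic in the middle component to $y^3$ and bringing the third component to $y^2+Ax^3+Bx^2y+Cxy^2+y^3$, thereby obtaining \eqref{eq_corank2} with $A,B,C$ explicit functions of the cross-cap invariants $a,b,p_3,q_{ij}$. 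The polynomial $\Theta(A,B,C)$ is then the condition that this $3$-jet be $\mathcal A$-finitely determined at this order, equivalently that the germ be the least degenerate corank-$2$ singularity; verifying $\Theta\neq0$ on a generic cross-cap closes this case.

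The main obstacle will be the bookkeeping in the corank-$1$ reductions at high order: the codimension-$4$ normal forms require carrying jets to order $7$--$9$ and completing the transversal and recognition steps of Mond's scheme, so the coordinate changes must be tracked carefully through several stages. For the corank-$2$ germ the principal difficulty is the determinacy computation producing $\Theta$, since one must control the full $T_e\mathcal A$-orbit of $(x^2,xy,y^2)$ plus a general cubic and extract the precise cubic discriminant $\Theta(A,B,C)$.
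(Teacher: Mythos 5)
Your proposal takes essentially the same route as the paper, whose own proof is only a sketch (successive changes of coordinates at the jet level, recognition via Mond's classification, and, in the tangential case, $\Theta(A,B,C)\neq0$ as the $3$-determinacy condition), with the detailed computations deferred to \cite{Barajas2017}. Your preliminary checks are correct and consistent with the tables: $df_\eta(0,0)$ is spanned by $e_1-\alpha\eta$, giving corank $2$ exactly for $\eta=\pm e_1$, and the coefficient vector of $y^2$ is $(-\alpha\gamma,-\beta\gamma,1-\gamma^2)$, which is independent of $e_1-\alpha\eta$ precisely when $\beta\neq0$, separating the $(x,y^2,\ast)$ prenormal forms of Table \ref{table1} from the $(x,xy+y^3,\ast)$ forms of Table \ref{table2}.
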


\begin{proof}
The proof follows by making successive changes of coordinates on the jet level and using the 
conditions for a map-germ to be $\mathcal A$-equivalent to one in Mond's list. When $\eta$ is parallel to the tangential direction, the condition $\Theta(A,B,C)\ne 0$ is for the germ \eqref{eq_corank2} to be 3-$\mathcal A_1$-determined. Details of the calculations can be found in \cite{Barajas2017}. The conditions for the folding map $f_{\eta}$ to have the different types of singularities listed are given on the vector $\eta$, therefore the result is true for an open set in the space of geometric cross-caps.
\end{proof}
\begin{prop}
The non-stable singularities of $f_{\eta}$ are not  $\mathcal{A}_e$-versally unfolded by the family $F$.
\end{prop}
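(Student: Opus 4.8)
My plan is to apply the versality criterion of Theorem \ref{thm_versal-unfolding}. Fix a direction $\eta=(\alpha,\beta,\gamma)\in\mathbb{S}^2$ for which $f:=f_\eta=f_{(\eta,0)}$ realises one of the non-stable singularities of Theorem \ref{th1}; by Proposition \ref{prop3.1} this forces $\delta=0$. The family $F$ is an unfolding over the $3$-dimensional parameter space $\mathbb{S}^2\times\mathbb{R}$, so it supplies exactly $q=3$ initial speeds: two of them, $\dot F_{w_1},\dot F_{w_2}$, coming from an orthonormal basis $w_1,w_2$ of $\eta^{\perp}=T_\eta\mathbb{S}^2$, and one, $\dot F_\delta$, from the $\delta$-direction. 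Writing $g=\langle\eta,\phi\rangle\in\mathcal{M}_2$ and differentiating the defining formula of $F$, I would record
\begin{equation*}
\dot F_\delta=(1-2g)\eta,\qquad \dot F_{w_i}=(2g-1)\langle w_i,\phi\rangle\,\eta+(g^2-g)\,w_i .
\end{equation*}
By Theorem \ref{thm_versal-unfolding}, $F$ is $\mathcal{A}_e$-versal at $f$ if and only if these three germs, together with $T_e\mathcal{A}\cdot f$, span $\mathcal{E}(2,3)$.

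The heart of the argument is a reduction of the three initial speeds modulo $T_e\mathcal{A}\cdot f$. Since $f=\phi+(g^2-g)\eta$, a one-line computation gives $\langle\eta,f\rangle=g^2$ and $\langle w_i,f\rangle=\langle w_i,\phi\rangle$, so both $g^2$ and every $\langle w_i,\phi\rangle$ lie in $f^*(\mathcal{M}_3)$. Because $f^*(\mathcal{E}_3)$ contains the constants, the whole of $\mathbb{R}\{e_1,e_2,e_3\}$ lies in $T_e\mathcal{A}\cdot f$, and because $g^2,\langle w_i,\phi\rangle\in f^*(\mathcal{M}_3)$ the germs $g^2 w_i$, $g^2\eta$ and $\langle w_i,\phi\rangle\eta$ all lie in $f^*(\mathcal{E}_3)\{e_1,e_2,e_3\}\subset T_e\mathcal{A}\cdot f$. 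Discarding these terms yields
\begin{equation*}
\dot F_\delta\equiv -2g\,\eta,\qquad \dot F_{w_i}\equiv g\left(2\langle w_i,\phi\rangle\,\eta-w_i\right)\pmod{T_e\mathcal{A}\cdot f}.
\end{equation*}
Thus every initial speed of $F$ lies, modulo $T_e\mathcal{A}\cdot f$, in the submodule $g\cdot\mathcal{E}(2,3)$; geometrically, every admissible infinitesimal deformation is divisible by the fold function $g=\langle\eta,\phi\rangle$.

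With this structural fact in hand I would finish case by case. For the corank-$2$ germ \eqref{eq_corank2}, whose $\mathcal{A}_e$-codimension is known to exceed $3$, and for every type in Tables \ref{table1} and \ref{table2} of $\mathcal{A}_e$-codimension $4$ (that is $B^{\pm}_4$, $C^{\pm}_4$, $F_{1,0}$, $R_4$, $T_4$ and the three $P_4$ types), versality is impossible on purely dimensional grounds: a versal unfolding needs at least as many parameters as the $\mathcal{A}_e$-codimension, whereas $F$ supplies only $q=3$. For the remaining lower-codimension types ($B^{\pm}_2$ of codimension $2$ and $B^{\pm}_3,C^{\pm}_3,P_3$ of codimension $3$) the parameter count alone is inconclusive, so here I would invoke the divisibility by $g$. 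Reading off the leading part $g=\alpha x+O(2)$ from \eqref{Par_Cros}, I would substitute the explicit $\mathcal{A}$-normalisation of $f$ produced in the proof of Theorem \ref{th1} and compute the images of the reduced speeds $-2g\eta$ and $g\left(2\langle w_i,\phi\rangle\eta-w_i\right)$ in the normal space $\mathcal{E}(2,3)/T_e\mathcal{A}\cdot f$, checking in each normal form that these classes span a subspace of dimension strictly smaller than the codimension. Equivalently, I would exhibit one normal-space generator (for instance a monomial not divisible by the image of $g$) that no $\mathbb{R}$-combination of the three initial speeds can reach.

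The routine part is the computation of the speeds and the module bookkeeping of the second paragraph. The main obstacle is the low-codimension cases $B^{\pm}_2,B^{\pm}_3,C^{\pm}_3,P_3$: there the $3$ parameters of $F$ are numerically sufficient, so one must genuinely transport the reduced speeds through the coordinate changes that place $f$ in Mond's normal form and verify the rank drop directly. I expect the drop to be essentially uniform, since the two $w_i$-speeds carry the extra factor $\langle w_i,\phi\rangle\in\mathcal{M}_2$ and are therefore of higher order than $\dot F_\delta$, so they cannot supply a direction transverse to $g\cdot\mathcal{E}(2,3)$; turning this heuristic into a single argument valid for all four families, rather than a type-by-type verification, is the delicate point.
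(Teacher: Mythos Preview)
Your reduction of the initial speeds modulo $T_e\mathcal{A}\cdot f$ to elements of $g\cdot\mathcal{E}(2,3)$ is correct and is a clean structural observation; the dimension count then legitimately disposes of all the $\mathcal{A}_e$-codimension $4$ types. But the proof is incomplete precisely where you flag it: for $B_2^\pm$, $B_3^\pm$, $C_3^\pm$ and $P_3$ you only announce a case-by-case verification without carrying it out, and your closing heuristic is mistaken. The $w_i$-speed reduces to $-gw_i+2g\langle w_i,\phi\rangle\eta$, so its leading term is $-gw_i$, \emph{not} something carrying an extra factor $\langle w_i,\phi\rangle$; all three reduced speeds have the same order, and the ``higher order'' argument you sketch for the rank drop does not work as stated.

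The paper avoids the case analysis with a single degree-$1$ obstruction valid for every non-stable $\eta_0$. After an $\mathcal{A}$-change putting $j^2f_{(\eta_0,0)}$ in the form \eqref{2jetfeta0}, it computes the initial speeds (using a redundant four-parameter chart on $\mathbb{S}^2\times\mathbb{R}$) and observes that each one lies in $\mathcal{M}_2^2\cdot\mathcal{E}(2,3)$. It then inspects $(j^2f)_x$ and $(j^2f)_y$ directly and shows that the only $y$-linear contributions one can manufacture in $T_e\mathcal{A}\cdot(j^2f)$ are scalar multiples of $(0,-\gamma_0,\beta_0)y$; hence $(0,y,0)$ and $(0,0,y)$ cannot both be reached, so $T_e\mathcal{A}\cdot f+\mathbb{R}\{\dot F_i\}$ already fails to surject onto $J^1(2,3)$.

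Your observation actually dovetails with this. Since $g=\alpha x+O(2)$, the $1$-jets of your three reduced speeds $-2g\eta,\,-gw_1+\cdots,\,-gw_2+\cdots$ span at most $x\cdot\mathbb{R}^3$ and contribute nothing in the $y$-linear direction. The missing ingredient is therefore exactly the paper's computation that $T_e\mathcal{A}\cdot f$ itself is deficient in that $y$-linear direction; supplying that one linear-algebra check would finish your argument uniformly, without the type-by-type verification you were anticipating.
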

\begin{proof}
Fix $\eta_0=\left(\alpha_0,\beta_0,\gamma_0\right)\in\mathbb{S}^2$ and consider o map germ $f_{(\eta_0,0)}$. Let $\xi=\left(u,v,w\right)\in\mathbb{R}^3$ and consider a real number $\delta\neq0$, $\left|\delta\right|<\varepsilon$ ($\varepsilon$ small). Thus, $f_{\left(\bar{\eta},\delta\right)}$ with $\bar{\eta}=\frac{\eta_0+\xi}{\left\|\eta_0+\xi\right\|}\in\mathbb{S}^2$ is a deformation of $(\eta_0,0)$. Let $F(x,y,u,v,w,\delta)=\left(f(x,y,u,v,w,\delta),u,v,w,\delta\right)$, where $f(x,y,u,v,w,\delta)=f_{\left(\bar{\eta},\delta\right)}(x,y)$ and 
$$f(x,y,0,0,0,0)=f_{(\eta_0,0)}(x,y).$$
It follows that $F$ is a 4-parameter unfolding of the map germ $f_{(\eta_0,0)}$. By Proposition \ref{prop3.1}, the map germ $f_{(\bar{\eta},\delta)}$ has singularity type cross-cap at origin. By changes of coordinates in the source and target, the map germ $j^2f_{\left(\eta_0,0\right)}$ is $\mathcal{A}$-equivalent to
\begin{equation}\label{2jetfeta0}
j^2f_{\left(\eta_0,0\right)}\sim_{\mathcal{A}} \left(x,-\gamma_0\left(\gamma_0-b\beta_0\right)xy+\beta_0\gamma_0y^2,\beta_0\left(\gamma_0-b\beta_0\right)xy-\beta_0^2y^2\right).
\end{equation} 
We consider the 2-jet of $f_{\left(\eta_0,0\right)}$ since 
 the map germ $f_{\left(\bar{\eta},\delta\right)}$, with $\delta\neq0$, has a cross-cap singularity. We take, without loss of generality,  
 $f_{\left(\eta_0,0\right)}$ as \eqref{2jetfeta0}. Thus, the initial speeds of $F$ are given by
\begin{align*} 
\dot{F}_u(x,y) =& \left(0, 2\alpha_0xy, 2b\alpha_0xy+2\alpha_0y^2\right), \\ 
\dot{F}_v(x,y)  =&\left(0, (b\gamma_0+2\beta_0)xy+\gamma_0y^2, \gamma_0xy\right), \\
\dot{F}_w(x,y) =& \left(0, b\beta_0xy+\beta_0y^2, (2b\gamma_0+\beta_0)xy+2\gamma_0y^2\right), \\
\dot{F}_\delta(x,y) =& \left(0,(2b\beta_0\gamma_0+2\left(1-\gamma_0^2\right))xy+2\gamma_0\beta_0y^2,\right. \\
                    & \left.2\left(\beta_0\gamma_0+b\left(1-\beta_0^2\right)\right)xy+2\left(1-\beta_0^2\right)y^2\right).
\end{align*}
Note that the Jacobian ideal of $j^2f_{\left(\eta_0,0\right)}$ is generated by
\begin{align*} 
\left(j^2f_{\left(\eta_0,0\right)}\right)_x =& \left(1,-\gamma_0\left(\gamma_0-b\beta_0\right)y,\beta_0\left(\gamma_0-b\beta_0\right)y\right),\\
\left(j^2f_{\left(\eta_0,0\right)}\right)_y =& \left(0,-\gamma_0\left(\gamma_0-b\beta_0\right)x+2\beta_0\gamma_0y,\beta_0\left(\gamma_0-b\beta_0\right)x-2\beta_0^2y\right).
\end{align*}
It follows that
\begin{align*} 
\left(j^2f_{\left(\eta_0,0\right)}\right)_x &- \left(j^2f_{\left(\eta_0,0\right)}\right)^*\left(1\right)\cdot e_1= \left(0,-\gamma_0\left(\gamma_0-b\beta_0\right)y,\beta_0\left(\gamma_0-b\beta_0\right)y\right),\\
%
\left(j^2f_{\left(\eta_0,0\right)}\right)_y &+ \left(\gamma_0-b\beta_0\right)\left(j^2f_{\left(\eta_0,0\right)}\right)^*\left(x\right)\cdot\left(\gamma_0 e_2-\beta_0 e_3\right) = \left(0,2\beta_0\gamma_0y,-2\beta_0^2y\right).
\end{align*} 

It follows that it is not possible to simultaneously obtain the vectors $\left(0,y,0\right)$ and $\left(0,0,y\right)$ in the extended tangent space $T_e\mathcal{A}\left(j^2f_{\left(\eta_0,0\right)}\right)$. Therefore $T_e\mathcal{A}\left(j^2f_{\left(\eta_0,0\right)}\right)+\mathbb{R}\left\{\dot{F}_u,\dot{F}_v,\dot{F}_w,\dot{F}_\delta\right\}\neq\mathcal{E}(2,3)$, which concludes the proof.
\end{proof}
%
%
%
\section{The geometry of the folding maps on a cross-cap}\label{section4}
In this section we characterize geometrically some singularities that occur in the members of the family of folding maps on a cross-cap.
We use the concepts in \S \ref{section2} and the maps which we define below.

Let $\theta(t)=(t,\alpha(t))$ be a germ of a regular curve with $\alpha'(0)=a_1\ne 0$ 
and consider its image $\mu(t)=\phi\circ\theta(t)$ on the cross-cap. Note that $\mu$ is a regular curve and $\mu'(0)$ is parallel to the tangential line. When $\alpha'(0)=0$ the image on the cross-cap is a singular curve. Curves with this condition will be considered later.

Let $N(p)$ denote the unit normal vector to the cross-cap surface away from the cross-cap point. At the cross-cap point, the surface has no well defined normal vector, that is, $N(p)$ does not extend to the cross-cap point. However, a simple calculation shows that
\begin{equation*}
\lim_{t\rightarrow0}N\left(\mu(t)\right)=\left(0,\frac{-\left(b+2a_1\right)}{\left(\left(b+2a_1\right)^2+1\right)^{\frac{1}{2}}},\frac{1}{\left(\left(b+2a_1\right)^2+1\right)^{\frac{1}{2}}}\right).
\end{equation*}
Therefore, the normal vector has a limiting direction along the curve $\mu$ at the cross-cap point, and this limiting direction is parallel to $(0,-(b+2a_1),1)$. Observe that the limiting direction depends only on $\theta'(0)$ and not on the curve $\mu$. Thus, we can define the following map.
\begin{definition}
Let $X$ be a cross-cap as in \eqref{Par_Cros} and let $N_0X$ denote its normal plane at the origin. We call the {\rm limiting normal  map} of $X$ the map $LN:\mathbb{R}^2\rightarrow N_0X$ given by
\begin{equation*}
LN(v_1,v_2)=\left(0,-(bv_1+2v_2),v_1\right).
\end{equation*}
\end{definition}
The limiting normal map induces a bijection between $\mathbb{S}^1\subset\mathbb{R}^2$ and $\mathbb{S}^1\subset N_{(0,0,0)}X$. 

Consider now the family of curves $\varpi_{\lambda}(t)=\left(\beta_{\lambda}(t),t\right)$, with 
$\beta_{\lambda}(0)=\beta_{\lambda}'(0)=0$ and  $\beta_{\lambda}''(0)=\lambda$. We have $\varpi'_{\lambda}(0)=(0,1)$ and
$
LN(0,1)=\left(0,-2,0\right). 
$
 (The direction $LN(0,1)$ is orthogonal to the tangent cone of the cross-cap.) Thus, the limiting normal direction to the cross-cap is the same for all the members of 
the family of curves $\varpi_{\lambda}$.  The image of $\varpi_{\lambda}(t)$ by $\phi$ is a singular curve on the cross-cap 
and its limiting tangent direction belongs to the tangent cone $TC_0X$ of the cross-cap. 
More precisely,
$$
\lim_{t\rightarrow0}\left(\phi\circ\varpi_{\lambda}\right)'(t)=\left(2\lambda,0,2\right).
$$

\begin{definition}
We call {\rm the limiting tangent map} of the cross-cap as in \eqref{Par_Cros} the map 
$LT:\mathbb{R}^2\rightarrow TC_0X$ given by
$$
LT(\lambda,1)=\frac{1}{2}\lim_{t\rightarrow0}\left(\phi\circ\varpi_{\lambda}\right)'(t)=
\left(\lambda,0,1 \right).
$$
\end{definition}

We use the limiting tangent and normal maps of the cross-cap to characterise the singularities of the folding maps.

\begin{thm}\label{th2}
Let $X$ be a cross-cap parametrised as in \eqref{Par_Cros}. 
We have the following characterization of the singularities of the folding maps $f_{\eta}$ on $X$.
\begin{enumerate}

	\item[(i)] The $C_4$ singularity occurs when folding with respect to a plane generated 
	by the tangential direction of the cross-cap and $LN(w)$, where $w$ is a tangent 
	direction at the origin to a sub-parabolic curve.

	\item[(ii)] The $F_{1,0}$ and $T_4$ singularities occur when folding with respect to 
	the plane generated by the tangential direction of the cross-cap and 
	$LN(w)$, where the $w$  is a tangent 
		direction at the origin to a ridge curve relative to $\nu_2$. 
		In particular $f_\eta$ has a singularity type $F_{1,0}$ when $\eta$ is orthogonal to the tangent cone. 

	\item[(iii)] The $P_4(\frac{1}{2})$ singularity occurs when folding with respect 
	to the plane generated by $LN(0,1)$ and the limiting tangent direction to the double point curve.
	\item[(iv)] The $P_4(\frac{3}{2})$ singularity occurs when folding with respect to the plane 
	generated by $LN(0,1)$ and the limiting tangent direction to the separatrix of principal foliations.
	\item[(v)] When $\eta$ is parallel to the limiting tangent direction of the double point curve, the singularity of $f_{\eta}$ is of type  $R_4$.
	\item[(vi)] The corank 2 map singularity of $f_{\eta}$ 
	occur when $\eta$ is parallel to the tangential direction, i.e., the folding plane is the normal plane.

\end{enumerate}
\end{thm}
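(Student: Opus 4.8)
The plan is to reduce each item to the classification already obtained in Theorem \ref{th1}. For every geometric curve in the statement I would read off its source tangent direction(s) $w=(w_1,w_2)$ from the equations of Section \ref{section2}, assemble the prescribed folding plane out of the given data, compute a unit normal $\eta=(\alpha,\beta,\gamma)$ of that plane, and then check that $\eta$ satisfies the defining condition of the claimed singularity in Table \ref{table1} or Table \ref{table2}. The only machinery required is the pair of maps $LN$ and $LT$, one cross product to turn a spanning pair of the plane into its normal, and the curve data: \eqref{eq_subparab} for the sub-parabolic curve, \eqref{eq_ridge1} for the ridge curves, \eqref{eq_sep_lc} for the separatrices, and $x=-p_3y^2+\text{h.o.t.}$ for the double point curve.

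For (i) and (ii) the folding plane is spanned by the tangential direction $e_1=(1,0,0)$ and $LN(w)=(0,-(bw_1+2w_2),w_1)$, so a normal is $e_1\times LN(w)=(0,-w_1,-(bw_1+2w_2))$; in particular $\alpha=0$, which places $\eta$ in the lower rows of Table \ref{table1}. For (i) I would set $\beta=w_1$ and $\gamma=bw_1+2w_2$, equivalently $w_2=\tfrac12(\gamma-b\beta)$, and substitute into \eqref{eq_subparab}. A direct expansion should give the identity \[ abw_1^3+(b^2+2a+1)w_1^2w_2+3bw_1w_2^2+2w_2^3=\tfrac14\,\Phi(\beta,\gamma), \] so the sub-parabolic condition is exactly $\Phi(\beta,\gamma)=0$, i.e. the $C_4$ entry. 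For (ii) the ridge equation \eqref{eq_ridge1} splits into $w_1=0$ and $bw_1-2w_2=0$: the first gives $LN(0,1)=(0,-2,0)$, hence the $xy$-plane and $\eta=(0,0,1)$, i.e. $\gamma=1$ (the $T_4$ case); the second yields the $F_{1,0}$ entry, whose normal is orthogonal to the tangent cone, and here one must in addition carry the $7$-jet to confirm the modular inequality $4A_1^3+27B_1^2\neq0$. Matching this second normal to the branch $bw_1-2w_2=0$ calls for care with the sign convention in $LN$ and the orientation of $\nu_2$.

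For (iii) and (iv) the plane is spanned by $LN(0,1)=(0,-2,0)$ and the limiting tangent of the image curve given by $LT$. A source curve $x=cy^2+\text{h.o.t.}$ has image with limiting tangent $\tfrac12\lim(\phi\circ\varpi)'\propto(c,0,1)$. For the double point curve $c=-p_3$, so the limiting tangent is $(-p_3,0,1)$ and \[ \eta=(0,-2,0)\times(-p_3,0,1)=(-2,0,-2p_3)\propto(1,0,p_3), \] giving $\gamma=p_3\alpha$; imposing $\|\eta\|=1$ yields $\alpha(\alpha+p_3\gamma)=1$, that is $\Psi(\tfrac12)=0$, the $P_4(\tfrac12)$ condition. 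For a separatrix $x=\lambda_iy^2$ with $\lambda_i$ a root of \eqref{eq_sep_lc} one finds $\eta\propto(1,0,-\lambda_i)$ and, again on the unit sphere, $\alpha(\alpha+p_3\gamma)=(1-p_3\lambda_i)/(1+\lambda_i^2)$; then $\Psi(\tfrac32)=0$ reads $3(1-p_3\lambda_i)=1+\lambda_i^2$, i.e. $\lambda_i^2+3p_3\lambda_i-2=0$, which is precisely \eqref{eq_sep_lc}. Hence the separatrix directions are exactly the $P_4(\tfrac32)$ directions.

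Items (v) and (vi) are then immediate: taking $\eta$ parallel to the limiting tangent $(-p_3,0,1)$ of the double point curve forces $\beta=0$ and $\alpha=-p_3\gamma$, the $R_4$ row of Table \ref{table2}; and $\eta$ parallel to the tangential direction $(1,0,0)$ makes the folding plane $\{x=0\}$, the normal plane, so $f_\eta$ has corank $2$ by Theorem \ref{th1}. I expect the main obstacle to be the $F_{1,0}$/$T_4$ dichotomy in (ii): because $\Psi$ (and the $F_{1,0}$ condition) are not homogeneous in $\eta$, the matching must be done with $\eta$ on the unit sphere rather than projectively, and separating $F_{1,0}$ from the adjacent $C_3$ and $T_4$ germs needs the normal-form reduction pushed far enough to control the modulus $(A_1,B_1)$. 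The remaining items use only leading jets and become routine once the directions above, together with the correct normalizations of $LN$ and $LT$, are fixed.
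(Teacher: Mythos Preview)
Your proposal is correct and follows essentially the same strategy as the paper: each item is reduced to the algebraic conditions in Tables~\ref{table1} and~\ref{table2} of Theorem~\ref{th1} by translating between the folding plane's normal $\eta$ and the relevant curve direction via $LN$, $LT$, and the equations \eqref{eq_subparab}, \eqref{eq_ridge1}, \eqref{eq_sep_lc}. The only difference is directional---you build $\eta$ from the geometric data and verify the table condition, while the paper starts from the table condition on $\eta$ and pulls back through $LN^{-1}$ or $LT^{-1}$ to land on the curve equation---and these are inverse computations of equal weight. Your caution about the sign in item~(ii) is well placed: the paper's own proof arrives at $bw_1+2w_2=0$ rather than the factor $bw_1-2w_2$ of \eqref{eq_ridge1}, so the discrepancy you anticipate is genuinely present in the source and not an artefact of your route.
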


\begin{proof}

We will characterize, in each case, the normal direction $\eta$ to the folding plane.

(i) Consider the map germ $f_\eta$ such that $\eta=(0,\beta,\gamma)$ satisfying $\Phi(\beta,\gamma)=0$ as in the Theorem \ref{th1}. By direct calculations we have that 
\begin{equation*}
(LN)^{-1}((0,-\gamma,\beta))=(w_1,w_2),
\end{equation*}
 where $\lambda,\mu$ satisfies
\begin{equation*}
abw_1^3+(b^2+2a+1)w_1^2w_2+3bw_1w_2^2+2w_2^3=0,
\end{equation*}
which is the same condition in the equation \eqref{eq_subparab}.
%

%
%

(ii) Take the map germ $f_\eta$ such that $\eta=(0,1,0)$, thus by the Theorem \ref{th1}, $f_\eta$ has a $F_{1,0}$ singularity at the origin. Follows that
\begin{equation*} 
(LN)^{-1}(0,0,1)=\left(1,-\frac{b}{2}\right)=(w_1,w_2)\quad\text{satisfying}\quad bw_1+2w_2=0,
\end{equation*}
which satisfies the equation \eqref{eq_ridge1}. Now consider the map germ $f_\eta$ such that $\eta=(0,0,1)$ and again by the Theorem \ref{th1}, $f_\eta$ has a $T_{4}$ singularity at the origin. Using the same idea
\begin{equation*} 
(LN)^{-1}(0,1,0)=\left(0,-\frac{1}{2}\right)=(w_1,w_2)\quad\text{satisfying}\quad w_1=0,
\end{equation*}
which is the other one condition that satisfy the equation \eqref{eq_ridge1}.


(iii) Consider the map germ $f_\eta$ such that $\eta=(\alpha,0,\gamma)$ satisfying $\Psi\left(\frac{1}{2}\right)=0$ and $\alpha\neq-p_3\gamma$. Note that
\begin{equation}\label{eq_psi12}
\Psi\left(\frac{1}{2}\right)=\Psi\left(\frac{1}{2},\alpha,\gamma\right)=-\alpha\gamma p_3+(1-\alpha^2)=\gamma(\gamma-p_3\alpha).
\end{equation}
Again we consider the orthogonal a $\eta$ in the tangent cone and applying to the equation \eqref{eq_psi12} and equaling to zero we obtain
\begin{equation*}
\Psi\left(\frac{1}{2},-\gamma,\alpha\right)=\alpha(\alpha+p_3\gamma)=0.
\end{equation*}
As $\alpha\neq0$ follows that $\alpha+p_3\gamma=0$. Calculating $(LT)^{-1}(-p_3,0,1)=(-p_3,1)$, which correspond with the 2-jet of curve of double points of the cross-cap $x=-p_3y^2$.  


(iv) Analogous as (iii) take a map germ $f_\eta$ such that $\eta=(\alpha,0,\gamma)$ satisfying $\Psi\left(\frac{3}{2}\right)=0$ and $\alpha\neq-p_3\gamma$. It follows that
\begin{equation}\label{eq_psi32}
\Psi\left(\frac{3}{2}\right)=\Psi\left(\frac{3}{2},\alpha,\gamma\right)=-3\alpha\gamma p_3-3\alpha^2+1=-3\alpha\gamma p_3-2\alpha^2+\gamma^2.
\end{equation}
As before we consider the orthogonal a $\eta$ in the tangent cone and applying to the equation \eqref{eq_psi32} we obtain
\begin{equation}\label{eq_psi32lambda}
\Psi\left(\frac{3}{2},-\gamma,\alpha\right)=3\alpha\gamma p_3+\alpha^2-2\gamma^2=\lambda^2+3p_3\lambda-2=0,
\end{equation}
where $\lambda=\frac{\alpha}{\gamma}$. Via $(LT)^{-1}$, the vectors $(-\gamma,0,\alpha)$ in the tangent cone satisfying the equation \eqref{eq_psi32lambda} correspond with the curves $x=\lambda y^2$ where $\lambda$ satisfies the same equation above. (Compare with the equation \eqref{eq_sep_lc}).

%
%
%
%

(v) As $\eta$ is tangent to $(-p_3,0,1)$, in particular satisfies $\alpha=-p_3\gamma$, which is exactly the condition in the Theorem \ref{th1} for the germ $f_\eta$ to have a singularity at the origin of type $R_4$.


(vi)  In this case $\eta=(1,0,0)$ and by definition, $f_\eta$ is the folding map respect to the normal plane. The rest follows from Theorem \ref{th1}.
\end{proof}

\textit{Acknowledgements:} The results in this paper are a part of the author's Ph.D. thesis. He would like to thank 
Farid Tari for his help, advice and friendship. The author was supported by a CAPES doctoral grant.


\nocite{*}

\end{document}